\documentclass[11pt]{amsart}
\usepackage{amssymb,graphicx}
\usepackage{amsfonts}
\usepackage{amsmath}
\usepackage{amsthm}
\usepackage{fancyhdr}
\usepackage{indentfirst}
\usepackage{hyperref}
\usepackage{comment}
\usepackage{color}
\usepackage{subcaption}
\usepackage{epsfig}
\usepackage{pst-grad} % For gradients
\usepackage{pst-plot} % For axes
\usepackage[space]{grffile} % For spaces in paths
\usepackage{etoolbox} % For spaces in paths

\usepackage{cancel}

%%%%%%%%%%%%%%%%%%%%%%%%%%%%%%%%%%%%%%%%%%%%%%%%%%%%%%%%%%%%%%%%%%%%%%%%%%%%%%%%%%%%%%%%%%%%%%%%%%%%%%%%%%%%%%%%

% New commands %

\newcommand{\V}{\mathcal{V}}
\newcommand{\RV}{\mathcal{RV}}

\newcommand{\R}{\mathbb{R}}

\newcommand{\lan}{\langle}
\newcommand{\ran}{\rangle}

\newcommand{\lc}{\llcorner}
%{\scalebox{2}{$\llcorner$}}

\newcommand{\si}{\sigma}
\newcommand{\Si}{\Sigma}

\newcommand{\ep}{\epsilon}

\newcommand{\Ga}{\Gamma}

\newcommand{\ti}{\tilde}

\newcommand{\M}{\mathbf{M}}
\newcommand{\Gr}{\mathbf{G}}

\newcommand{\bV}{\mathbf{V}}
\newcommand{\bRV}{\mathbf{RV}}

\newcommand{\tM}{\widetilde{M}}

\newcommand{\spt}{\operatorname{spt}}
\newcommand{\dist}{\operatorname{dist}}
\newcommand{\Div}{\operatorname{div}}
\newcommand{\tr}{\operatorname{tr}}

\newcommand{\interior}{\operatorname{int}}

\newcommand{\Clos}{\operatorname{Clos}}
\newcommand{\Span}{\operatorname{span}}

\newcommand{\Mat}[1]{{\begin{bmatrix}#1\end{bmatrix}}}
\newcommand{\rom}[1]{\expandafter\romannumeral #1}
% New commands %

%%%%%%%%%%%%%%%%%%%%%%%%%%%%%%%%%%%%%%%%%%%%%%%%%%%%%%%%%%%%%%%%%%%%%%%%%%%%%%%%%%%%%%%%%%%%%%%%%%%%%%%%%%%%%%%%

%Ò³ÃæÉèÖÃ
%ÐÐŸà
%\renewcommand{\baselinestretch}{1.1}
%\topmargin 0cm \oddsidemargin 0.51cm \evensidemargin 0.51cm
%\textwidth 15.66cm \textheight 21.23cm

%\parindent 0ex

\begin{document}

\newtheorem{theorem}{Theorem}[section]
\newtheorem{proposition}[theorem]{Proposition}
\newtheorem{corollary}[theorem]{Corollary}

\newtheorem{claim}{Claim}

\theoremstyle{remark}
\newtheorem{remark}[theorem]{Remark}

\theoremstyle{definition}
\newtheorem{definition}[theorem]{Definition}

\theoremstyle{plain}
\newtheorem{lemma}[theorem]{Lemma}

\numberwithin{equation}{section}

%Page head and Page foot
%\pagestyle{headings}
%\lhead{} \chead{min-max minimal surfaces} \rhead{}
%\lfoot{} \cfoot{\thepage} \rfoot{}
%\renewcommand{\headrulewidth}{0.4pt}

%date 
\date{\today}
\subjclass{53C42 (primary) 49Q20 (secondary)}

%Tile and Author
\title[Maximum principle with free boundary]{A maximum principle for free boundary minimal varieties of arbitrary codimension}
\author[Martin Li]{Martin Man-chun Li}
\address{Department of Mathematics, The Chinese University of Hong Kong, Shatin, N.T., Hong Kong}
\email{martinli@math.cuhk.edu.hk}

\author[Xin Zhou]{Xin Zhou}
\address{Department of Mathematics, University of California Santa Barbara, Santa Barbara, CA 93106, USA}
\email{zhou@math.ucsb.edu}
\maketitle

\pdfbookmark[0]{}{beg}

%Begin abstract
%\renewcommand{\abstractname}{}    % clear the title
%\renewcommand{\absnamepos}{empty} % originally center
\begin{abstract}
We establish a boundary maximum principle for free boundary minimal submanifolds in a Riemannian manifold with boundary, in any dimension and codimension. Our result holds more generally in the context of varifolds.
\end{abstract}

%%%%%%%%%%%%%%%%%%%%%%%%%%%%%%%%%%
% Section 1   	 		  Introduction	                      		     %
%%%%%%%%%%%%%%%%%%%%%%%%%%%%%%%%%%
\section{Introduction}
\label{S:intro}

Let $N^*$ be a smooth $(n+1)$-dimensional Riemannian manifold with smooth boundary $\partial N^* \neq \emptyset$, whose inward unit normal (relative to $N^*$) is denoted by $\nu_{\partial N^*}$. The metric and the Levi-Civita connection on $N^*$ will be denoted by $\langle \cdot, \cdot \rangle$ and $\nabla$ respectively.

Suppose $N \subset N^*$ is a compact domain whose topological boundary $S:=\partial N$ is a smooth \emph{properly} embedded hypersurface in $N^*$. In other words, $S$ is smooth embedded hypersurface with boundary $\partial S=S \cap \partial N^*$. Let $\nu_S$ denote the unit normal of $S=\partial N$ pointing into $N$. Recall that a point $p \in S$ is said to be \emph{strongly $m$-convex} provided that 
\[ \kappa_1 + \kappa_2 + \cdots + \kappa_m >0 \]
where $\kappa_1 \leq \kappa_2 \leq \cdots \leq \kappa_n$ are the principal curvatures \footnote{The principal curvatures of a hypersurface $S$ (possibly with smooth boundary) at a point $p \in S$ are defined to be the eigenvalues of the second fundamental form $A^S$ as a self-adjoint operator on $T_p S$ given by $A^S(u):=-\nabla_u \nu_S$ where $\nu_S$ is a fixed unit normal to $S$.} of $S$ at $p$ with respect to $\nu_S$. We will often denote $T:=N \cap \partial N^*$ with inward unit normal $\nu_T$ pointing into $N$. Note that any of the hypersurfaces $S, T$ and their common boundary $S \cap T$ could be disconnected. See Figure \ref{proper-domain}.

%\begin{definition}
%\label{D:proper}
%A compact subset $N \subset N^*$ is called a \emph{proper sub-domain of $N^*$} if $N$ is itself an $(n+1)$-dimensional Riemannian manifold with piecewise smooth boundary $\partial N= S \cup T$ where $T=N \cap \partial N^*$ and $S=\overline{\partial N \setminus T}$ \footnote{Throughout this paper, we use $\overline{A}$ to denote the closure of any subset $A \subset N^*$.} are smooth hypersurfaces in $N^*$ with smooth common boundary $S \cap T$. 
%\end{definition}

%Note that any of the hypersurfaces $S, T$ and their common boundary $S \cap T$ could be disconnected. We will denote $\nu_S$ and $\nu_T$ to be the unit normal to $S$ and $T$ respectively that points into $N$. See Figure \ref{proper-domain}. We also regard $N^*$ as a proper sub-domain of itself with $S=\emptyset$ and $T=\partial N^*$, provided that $N^*$ is compact.

\begin{figure}[h]
    \centering
    \includegraphics[width=0.4\textwidth]{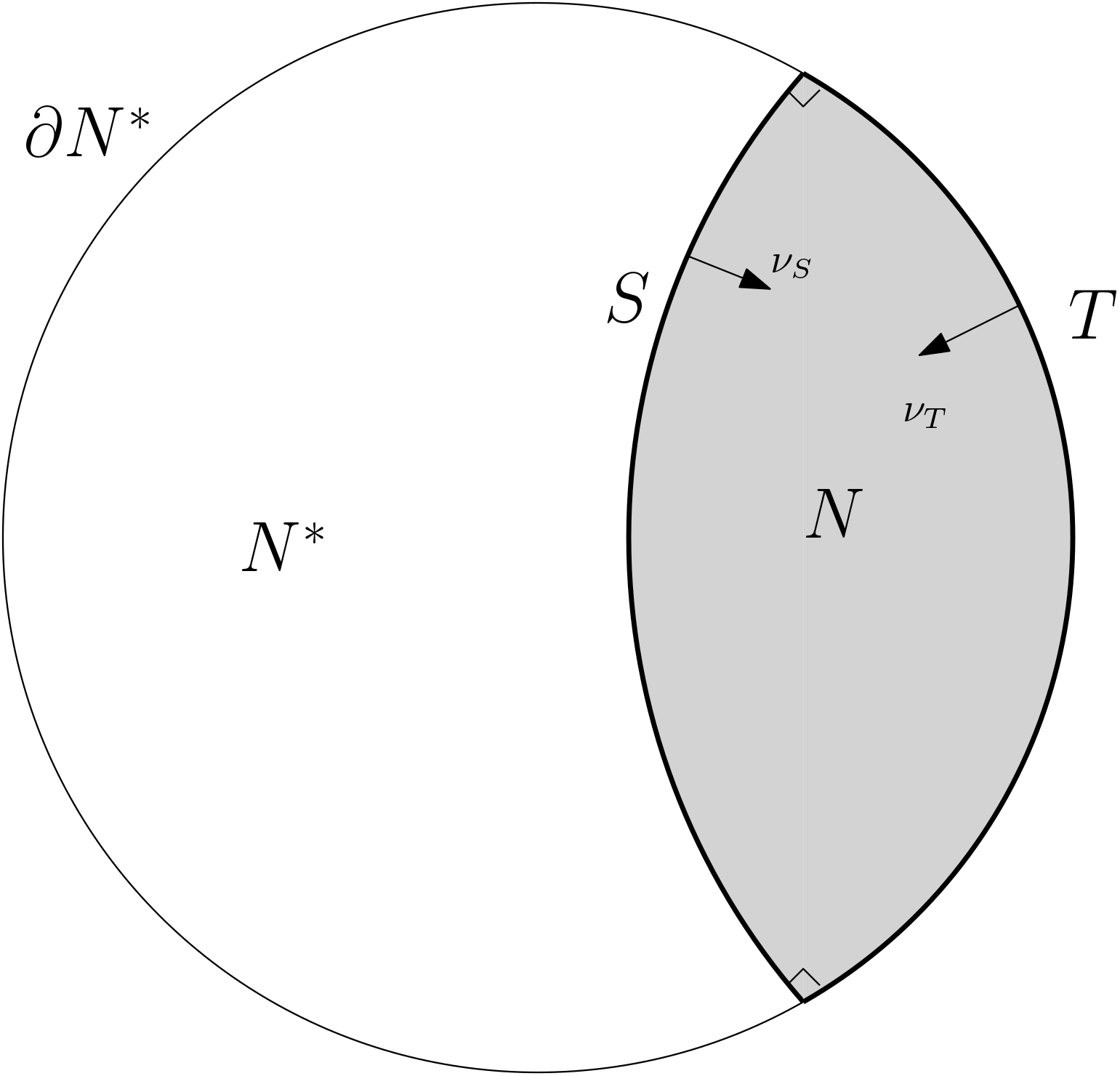}
        \caption{A compact domain $N \subset N^*$ whose topological boundary $S:=\partial N$ is a properly embedded hypersurface meeting $\partial N^*$ orthogonally.}
        \label{proper-domain}
\end{figure}

%\begin{definition}
%\label{D:orthogonal}
%A proper sub-domain $N$ of $N^*$ is said to be
%\begin{itemize}
%\item[(i)] \emph{orthogonal} if $S$ and $T$ intersect each other orthogonally along their common boundary $S \cap T$;
%\item[(ii)] \emph{strongly $m$-convex} at a point $p \in S$ provided that 
%\[ \kappa_1 + \kappa_2 + \cdots + \kappa_m >0 \]
%where $\kappa_1 \leq \kappa_2 \leq \cdots \leq \kappa_n$ are the principal curvatures \footnote{The principal curvatures of a hypersurface $S$ (possibly with smooth boundary) at a point $p \in S$ are defined to be the eigenvalues of the second fundamental form $A^S$ as a self-adjoint operator on $T_p S$ given by $A^S(u):=-\nabla_u \nu_S$ where $\nu_S$ is a fixed unit normal to $S$.} of $S$ at $p$ with respect to $\nu_S$.
%\end{definition}

Consider the following space of ``tangential'' vector fields
\[ \mathfrak{X}(N^*):=\left\{ \begin{array}{c} \text{compactly supported $C^1$ vector field $X$ on $N^*$}\\
\text{such that $\langle X, \nu_{\partial N^*} \rangle =0$ along $\partial N^*$} \end{array} \right\},\]
any $X \in \mathfrak{X}(N^*)$ generates a one-parameter family of diffeomorphisms $\{\phi_t\}_{t \in \mathbb{R}}$ of $N^*$ such that $\phi_0$ is the identity map of $N^*$ and $\phi_t(\partial N^*)=\partial N^*$ for all $t$. If $V$ is a $C^1$ submanifold of $N^*$ with boundary $\partial V \subset \partial N^*$ such that $V$ has locally finite area, then we denote the first variation of area of $V$ with respect to $X$ by:
\begin{equation}
\label{E:1st variation}
\delta V (X) :=\left. \frac{d}{dt} \right|_{t=0} \text{area}(\phi_t(V)).
\end{equation}
Note that (\ref{E:1st variation}) makes sense even when $V$ has infinite total area as the vector field $X$ (hence $\phi_t$) is compactly supported. In fact, the same discussion holds for any varifold $V$. We refer the readers to the appendix of \cite{White09} for a quick introduction to varifolds. We will be following the notations in \cite{White09} closely. Readers who are not familiar with the notion of varifolds may simply replace any varifold $V$ by a $C^1$ submanifold with boundary lying inside $\partial N^*$. 

\begin{definition}
An $m$-dimensional varifold $V$ is said to be \emph{stationary with free boundary} if $\delta V(X)=0$ for all $X \in \mathfrak{X}(N^*)$.
\end{definition}

Note that any $C^1$ submanifold $M$ of $N^*$ with boundary $\partial M= M \cap \partial N^*$ is stationary with free boundary if and only if $M$ is a minimal submanifold in $N$ meeting $\partial N^*$ orthogonally along $\partial M$. These are commonly called \emph{properly embedded} \footnote{See \cite{Li15} and \cite{LiZ16} for a more detailed discussion on properness.} \emph{free boundary minimal submanifolds}.

The goal of this paper is to prove the following result, which generalize the main result of \cite{White10} to the free boundary setting.

\begin{theorem}[Boundary maximum principle for stationary varifolds with free boundary]
\label{T:main-thm}
Let $N \subset N^*$ be a compact domain whose topological boundary $S:=\partial N$ is a properly embedded hypersurface meeting $\partial N^*$ orthogonally. Suppose $S$ is strongly $m$-convex at a point $p \in \partial S$. Then, $p$ is not contained in the support of any $m$-dimensional varifold $V$ which is supported in $N$ and stationary with free boundary.
\end{theorem}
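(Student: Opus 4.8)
I would argue by contradiction, reducing the statement to the interior maximum principle of \cite{White10} via a reflection across $T$; the orthogonality hypothesis is precisely what makes this reduction legitimate. Suppose $p\in\spt V$. Since the conclusion is local, it suffices to work in a small neighborhood $\mathcal{O}$ of $p$ in $N^*$; shrinking it, I may assume that $\mathcal{O}\cap\partial N^*=\mathcal{O}\cap T$, that $S$ is strongly $m$-convex at every point of $\mathcal{O}\cap S$ (by continuity of the principal curvatures), and that Fermi coordinates $(y,t)$ for $\partial N^*$ are available on $\mathcal{O}$ with $t=\dist(\cdot,\partial N^*)\ge 0$, so that the metric reads $dt^2+g_t$. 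Let $\widehat{\mathcal{O}}$ be the double (coordinates $(y,t)$, $|t|<\varepsilon$), let $\iota$ be the smooth involution $t\mapsto -t$, and let $\widehat g=dt^2+g_{|t|}$, an $\iota$-invariant metric, in general only Lipschitz across $T$, restricting to the given metric on $\mathcal{O}$.

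I would then check three points. (i) Because $S$ meets $T$ orthogonally along $S\cap T$, the tangent plane $T_qS$ contains $\nu_T$ for every $q\in S\cap T$; since $d\iota$ fixes $T_qT$ and negates $\nu_T$, the plane $T_qS$ is $\iota$-invariant, so $\widehat S:=S\cup\iota(S)$ is an embedded $C^{1,1}$ hypersurface of $\widehat{\mathcal{O}}$, smooth off the seam $S\cap T$. (ii) The second fundamental form of $\iota(S)$ at $p$ is the pull-back of $A^S$ by the linear isometry $d\iota|_{T_pS}$, hence has the same eigenvalues $\kappa_1\le\cdots\le\kappa_n$; so $\widehat S$ is strongly $m$-convex at $p$ from either side. (iii) Put $\widehat V:=V+\iota_\#V$ as a varifold on $\widehat{\mathcal{O}}$. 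Decomposing $\widehat X\in C^1_c(\widehat{\mathcal{O}})$ into its $\iota$-even and $\iota$-odd parts and using only that $\iota$ is a $\widehat g$-isometry together with the linearity of the first variation, one gets $\delta\widehat V(\widehat X_{\mathrm{odd}})=0$ automatically and $\delta\widehat V(\widehat X_{\mathrm{even}})=2\,\delta V(\widehat X_{\mathrm{even}}|_{\mathcal{O}})$; since $\widehat X_{\mathrm{even}}$ is tangent to $T$ along $T$, the field $\widehat X_{\mathrm{even}}|_{\mathcal{O}}$ belongs to $\mathfrak{X}(N^*)$, so stationarity with free boundary of $V$ forces $\delta\widehat V(\widehat X)=0$. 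Thus $\widehat V$ is a stationary $m$-varifold in $(\widehat{\mathcal{O}},\widehat g)$, supported on one side of $\widehat S$, with $p\in\spt\widehat V\cap\widehat S$ and $\widehat S$ strongly $m$-convex at $p$.

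It then remains to run White's barrier argument for $\widehat V$ and $\widehat S$. Let $\rho$ be the signed distance to $\widehat S$, taken positive on the side of $\spt\widehat V$; at $p$ the Hessian of $\rho$ has eigenvalues $\{0,-\kappa_1,\dots,-\kappa_n\}$ (in the a.e.\ sense near the seam). For $X=-\phi(\rho)\nabla\rho$ with a profile obeying $\phi\ge0$, $\phi'+\Lambda\phi\le0$, and $\Lambda$ larger than all principal curvatures of the nearby level sets of $\rho$, one has, for every $m$-plane $P$ near $p$,
\[ \Div_P X \;\ge\; \phi(\rho)\big(\Lambda\,|\operatorname{proj}_P\nabla\rho|^2-\Delta_P\rho\big)\;\ge\; c\,\phi(\rho)\;>\;0, \]
the last step being the elementary linear-algebra estimate on the trace of $\Hess\rho$ over $m$-dimensional subspaces that uses $\kappa_1+\cdots+\kappa_m>0$. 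Integrating $\Div_{T_x\widehat V}X$ against $\|\widehat V\|$ and using $p\in\spt\widehat V$ would then contradict stationarity of $\widehat V$ — once $X$ has been truncated to a compact neighborhood of $p$ without destroying the sign.

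The step I expect to be the main obstacle is exactly this truncation. Strong $m$-convexity is assumed only at the single point $p$, so the whole tube $\{\rho<s_0\}$ around $\widehat S$ is not an admissible support for $X$, and the gradient-of-cutoff term has the wrong sign and, estimated crudely, dominates the good term. I would handle it by White's device of choosing the cutoff to be a function of $\rho$ itself — equivalently, of a self-localizing barrier built from $\rho$ — so that the cutoff term acquires a favorable sign, balancing it against the lower bound for $\|\widehat V\|$ near $p$ furnished by the monotonicity formula. A second, more routine, technical point is that the distance-function computation must be carried in the $C^{1,1}$-metric, $C^{1,1}$-hypersurface setting created by the doubling, reading the divergence identities almost everywhere; alternatively one can avoid the reflection altogether and build directly on $N^*$ a barrier whose gradient is tangent to $T$ along $T$, the existence of such a barrier near the corner being, once more, a consequence of the orthogonality of $S$ and $T$.
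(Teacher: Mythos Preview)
Your approach is genuinely different from the paper's. The paper works directly in $N^*$: it perturbs $S$ to a nearby hypersurface $S'$ that still meets $\partial N^*$ orthogonally but touches $N$ from outside only at $p$, then extends $S'$ to a foliation $\{S'_s\}$ in which every leaf meets $\partial N^*$ orthogonally (Lemma~\ref{L:foliation}), and sets $X=\phi(s)\,\nu$ with $\nu$ the unit normal to this foliation. Orthogonality of the leaves forces $\nu$, hence $X$, to be tangent to $\partial N^*$, so $X\in\mathfrak X(N^*)$ automatically; the divergence is then computed in the original smooth metric via an adapted frame (Lemma~\ref{L:frame}, Lemma~\ref{L:Q}). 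This is precisely the ``alternative'' you mention in your last sentence, and it is what the paper actually does.

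Your primary route via reflection is natural, and your verification that $\widehat V=V+\iota_\#V$ is stationary is correct. The real gap is regularity. You call the doubled setting ``$C^{1,1}$-metric'' in the closing paragraph, but $\widehat g=dt^2+g_{|t|}$ is in general only Lipschitz across the seam: in Fermi coordinates $\partial_t g_t|_{t=0}=-2A^T$, so unless $T$ is totally geodesic the first derivatives of $\widehat g$ jump at $t=0$. White's barrier argument needs the Hessian of the signed distance $\rho$, hence the Levi--Civita connection, which here is only $L^\infty$ and discontinuous across $T$; your ``a.e.'' reading does not suffice because $V$ may carry mass on $T$ (indeed $p\in T$, and nothing forbids $\spt V\subset T$). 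Smoothing $\widehat g$ destroys the isometry $\iota$ and with it your stationarity argument; symmetrizing White's perturbed barrier under $\iota$ and restricting to $\mathcal O$ simply reproduces the paper's direct construction. A smaller point: your localization paragraph is off target. White does not balance a bad cutoff term against monotonicity; he perturbs the barrier so that it touches $N$ only at $p$, whence $\{\,0\le\rho<\varepsilon\,\}\cap N$ is already compact and $X=\phi(\rho)\nabla\rho$ is compactly supported with $\Div_P X<0$ pointwise. The paper carries out exactly this perturbation while preserving orthogonality to $\partial N^*$, which is the step that has no obvious analogue on the doubled side.
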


Theorem 1 of \cite{White10} establishes the maximum principle at any \emph{interior} point of $S$ which is strongly $m$-convex. Our result above shows that any stationary varifold with free boundary cannot touch $S$ from inside of $N$ at a strongly $m$-convex point on the \emph{boundary} of $S$ either. In case the varifold $V$ is a $C^2$ hypersurface (i.e. $m=n$) with free boundary lying inside $T$, our theorem follows from the classical boundary Hopf lemma \cite[Lemma 3.4]{Gilbarg-Trudinger} as follows. Suppose $p$ is a boundary point \footnote{Note that $p$ cannot be an interior point. Otherwise, $V$ would have non-empty support outside $N$ by transversality.} of the $C^2$ hypersurface $V$. Using the Fermi coordinate system relative to $T$ centered at $p$ (see \cite[Section 7]{Li-Zhou-A} for example), one can locally express $S$ and $V$ as graphs of functions $f_S$ and $f_V$ respectively over an $n$-dimensional half-ball $B^+_{r_0}=\{x_1^2 + \cdots +x_n^2 <r_0, x_1 \geq 0, x_{n+1}=0\}$ such that $f_V\geq f_S$ because $V$ lies completely on one side of $S$. Then, the difference $u:=f_V-f_S$ is a $C^2$ function on $B^+_{r_0}$ satisfying $Lu \leq 0$ in the interior of $B^+_{r_0}$ for some uniformly elliptic second order differential operator $L$. Moreover, since $S$ is orthogonal to $T$ and $V$ is a free boundary hypersurface, the function $u$ satisfies the following homogeneous Neumann boundary condition along $\{x_1=0\}$:
\begin{equation}
\label{E:free-bdy}
\frac{\partial u}{\partial x_1}=0.
\end{equation}
Since $u \geq 0$ everywhere in $B^+_{r_0}$ and attains zero as its minimum value at the origin, (\ref{E:free-bdy}) violates the boundary Hopf lemma \cite[Lemma 3.4]{Gilbarg-Trudinger}. Our main theorem (Theorem \ref{T:main-thm}) shows that the same result holds in any codimension and in the context of varifolds as well.

The interior maximum principle for minimal submanifolds \emph{without boundary} has been proved in various context. The case for $C^2$ hypersurfaces follows directly from Hopf's classical interior maximum principle \cite[Theorem 3.5]{Gilbarg-Trudinger}. Jorge and Tomi \cite{Jorge-Tomi-03} generalized the result to $C^2$ submanifolds in any codimension. Later, White \cite{White10} proved that the maximum principle holds in the context of varifolds, which has important consequences as for example in the Almgren-Pitts min-max theory on the existence and regularity of minimal hypersurfaces in Riemannian manifolds (see \cite[Proposition 2.5]{Pitts} for example). Similarly, our boundary maximum principle (Theorem \ref{T:main-thm}) is a key ingredient in the regularity part of the min-max theory for free boundary minimal hypersurfaces in compact Riemannian manifolds with non-empty boundary, which is developed in \cite{LiZ16} by the authors. We expect to see more applications of Theorem \ref{T:main-thm} to other situations related to the study of free boundary minimal submanifolds.

Our method of proof of Theorem \ref{T:main-thm} is mostly inspired by the arguments in \cite{White10} (also in \cite[Proposition 2.5]{Pitts}). The key point is to construct a suitable test vector field $X$ which is compactly supported locally near the point $p$ and is universally area-decreasing for any varifold $V$ contained inside $N$ (see \cite[Theorem 2]{White10}). However, the situation is somewhat trickier in the free boundary setting as the test vector field $X$ constructed has to be \emph{tangential}, i.e. $X \in \mathfrak{X}(N^*)$. In the interior setting of \cite{White10}, the vector field $X$ is constructed as the gradient of the distance function from a perturbed hypersurface which touches the boundary of $N$ up to second order at $p$. Unfortunately, the distance function from a free boundary hypersurface does not behave well near the free boundary for at least two reaons. First of all, the distance function may fail to be $C^2$ near the boundary. Second, even if it is smooth, its gradient may not be tangential and thus cannot be used as a test vector field. We overcome these difficulties by constructing a pair of mutually orthogonal foliations near $p$, one of which consists of free boundary hypersurfaces for each leaf of the foliation. We then define our test vector field $X$ to be the unit normal to the foliation consisting of free boundary hypersurfaces and show that it is universally area-decreasing as in \cite{White10}. 
We would like to point out that the same argument also applies to varifolds which only \emph{minimize area to first order in $N$} in the sense of \cite{White10} and to free boundary varieties with bounded mean curvature in a weak sense. 
%The proofs are exactly the same and the results are stated at the end of the paper.

%The key ingredient of the proof is to construct certain local mean convex foliation near a given point $p\in\partial_{rel}K\cap \partial M$. The foliation is chosen so that each slice intersects $\interior(K)$ in a compact subset. Nevertheless, our situation is subtle as we need to make sure that each slice in the foliation meets $\partial M$ orthogonally, or equivalently each slice should have free boundary. We will choose a test variational vector field as the unit normal vector field of this foliation multiplied with some cutoff function. Finally the foliation should satisfy that the unit normal vector field is parallel, at least at the base point $p$. 

%\vspace{1em}
The paper is organized as follows. In Section \ref{S:foliations}, we give a detailed local construction (Lemma \ref{L:foliation}) of orthogonal foliations near a boundary point $p \in \partial N^*$ where a hypersurface $S$ meets $\partial N^*$ orthogonally. We can then choose a local orthonormal frame adapted to such foliation which gives a nice decomposition of the second fundamental form (Lemma \ref{L:frame}). We give the proof of our main result (Theorem \ref{T:main-thm}) in Section \ref{S:proof}. 
%Finally, we state the corresponding generalizations to varifolds which minimize area up to first order and with bounded mean curvature in a weak sense. 
%In section \ref{S:definitions}, we give some basic definitions about varifolds and establish a few preliminary results including a decomposition of the second fundamental form using an orthonormal frame adapted to a free boundary minimal hypersurface. We then prove our main result Theorem \ref{T:max-principle-B} in section \ref{S:max-principle}.
All functions and hypersurfaces are assumed to be smooth (i.e. $C^\infty$) unless otherwise stated.

%\vspace{1em}
%The organization of the paper is as follows. In section \ref{S:definitions} we give various definitions concerning varifolds in Riemannian manifolds with boundary. In section \ref{S:max-principle}, we give the proof of our main result Theorem \ref{T:max-principle-B}.

\vspace{1em}
{\bf Acknowledgements}: 
The authors would like to thank Prof. Richard Schoen for his continuous encouragement. They also want to thank Prof. Shing Tung Yau, Prof. Tobias Colding and Prof. Bill Minicozzi for their interest in this work.
% people to acknowledge by Martin
% people to acknowledge by Xin
The first author is partially supported by a research grant from the Research Grants Council of the Hong Kong Special Administrative Region, China [Project No.: CUHK 24305115]. 
%and CUHK Direct Grant [Project Code: 4053118].
The second author is partially supported by NSF grant DMS-1704393.

%%%%%%%%%%%%%%%%%%%%%%%%%%%%%%%%%%
% Section 2    	 		  Orthogonal Foliations      	             %
%%%%%%%%%%%%%%%%%%%%%%%%%%%%%%%%%%
\section{Orthogonal Foliations}
\label{S:foliations}

Throughout this section, let $N^*$ be an $(n+1)$-dimensional Riemannian manifold with boundary $\partial N^* \neq \emptyset$ as in Section \ref{S:intro}. Let $p \in \partial N^*$ be a point on the boundary of $N^*$. Suppose $S$ is a hypersurface in $N^*$ which meets $\partial N^*$ orthogonally along its boundary $\partial S = S \cap \partial N^*$ containing the point $p$. We first show that one can extend $S$ and $\partial N^*$ locally near $p$ to foliations whose leaves are mutually orthogonal to each other.

\begin{lemma}
\label{L:foliation}
There exists a constant $\delta>0$, a neighborhood $U \subset N^*$ containing $p$ and foliations \footnote{See for example \cite{Lawson74} for a precise definition of a foliation. When $U$ possess a boundary, one requires one of the following: (i) all the leaves are transversal to the boundary; or (ii) every leaf is either contained in the boundary or is completely disjoint from it.} $\{S_s\}, \{T_t\}$, with $s \in (-\delta,\delta)$ and $t\in [0, \delta)$, of $U$ such that $S_0=S \cap U$, $T_0=\partial N^* \cap U$; and $S_s$ intersect $T_t$ orthogonally for every $s$ and $t$. In particular, each hypersurface $S_s$ meets $\partial N^*$ orthogonally. (See Figure \ref{foliation}.)
\end{lemma}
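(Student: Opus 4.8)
The plan is to reduce the whole statement to the construction of a \emph{single} auxiliary function $v$ near $p$, working in Fermi coordinates relative to $\partial N^*$, and then to produce the two foliations as the level sets of $v$ and as the flow-out of a foliation of $\partial N^*$ along $\nabla v$. So first I would set up Fermi coordinates $(x_1,\dots,x_n,x_{n+1})$ for $\partial N^*$ near $p$, in which $\partial N^*=\{x_{n+1}=0\}$, $N^*=\{x_{n+1}\ge 0\}$, the metric reads $g=dx_{n+1}^2+g_{x_{n+1}}$, and $\partial_{n+1}=\nabla x_{n+1}$ is the inward unit normal to every slice $\{x_{n+1}=t\}$. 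Along $\partial S$ the hypothesis $S\perp\partial N^*$ gives $\langle\partial_{n+1},\nu_S\rangle=0$, while $\partial_{n+1}$ is not normal to $S$, so $S$ is transverse to all the slices near $p$; hence $\xi:=x_{n+1}|_S$ is a boundary-defining function for $S$ with zero set exactly $\partial S$, and $S$ has a defining function $\rho$ on the chart with $\{\rho=0\}=S$, normalized so that $\langle\nabla\rho,\nu_S\rangle\equiv 1$ along $S$.

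Next I would construct $v$ of the form $v=x_{n+1}\,w$ with $w>0$ smooth, chosen so that $\nabla v$ is tangent to $S$ at every point of $S$. Writing $c:=\langle\partial_{n+1},\nu_S\rangle$, one computes $\langle\nabla v,\nu_S\rangle|_S=(c\,w+x_{n+1}\,\nu_S(w))|_S$. Since $c$ vanishes on $\partial S=\{\xi=0\}$, Hadamard's lemma gives $c|_S=\xi\,c_1$ with $c_1$ smooth on $S$, so $\langle\nabla v,\nu_S\rangle|_S=\xi\,(c_1 w+\nu_S(w))|_S$, and the desired tangency is equivalent to the first-order relation $\nu_S(w)|_S=-c_1\,w|_S$. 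Prescribing $w|_S\equiv 1$, this merely prescribes $w$ and its $\nu_S$-derivative along $S$, and is solved, e.g., by $w:=1-\rho\,\widetilde{c}_1$ for any smooth extension $\widetilde{c}_1$ of $c_1$; after shrinking the chart one keeps $w>0$ and $\nabla v=w\,\partial_{n+1}+x_{n+1}\nabla w\ne 0$.

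Then I would set $T_t:=\{v=t\}$; since $\nabla v\ne 0$ and $w>0$, these are hypersurfaces foliating a smaller neighbourhood $U$ of $p$, with $T_0=\partial N^*\cap U$ and $t\in[0,\delta)$. I would pick any foliation $\{\sigma_s\}_{s\in(-\delta,\delta)}$ of $\partial N^*\cap U$ by hypersurfaces with $\sigma_0=\partial S$ (for instance the level sets of $\rho|_{\partial N^*}$), and let $S_s$ be the flow-out $\bigcup_{\tau\ge 0}\phi^{\nabla v}_\tau(\sigma_s)$ of $\sigma_s$ under the flow of $\nabla v$. Because $\nabla v$ has positive $\partial_{n+1}$-component along $\partial N^*$, the associated map is a local diffeomorphism at $p$, so $\{S_s\}$ is a foliation of $U$, and each $S_s$, being a union of integral curves of $\nabla v$, has $\nabla v$ tangent to it. Moreover $S_0=S$ near $p$: by the previous step $\nabla v$ is tangent to $S$ and, at points of $\partial S$, transverse to $\partial S$ inside $S$, so the flow-out of $\partial S\subset S$ stays in $S$ and is $n$-dimensional, hence equals $S$. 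Finally, for any $q\in S_s\cap T_t$ we have $T_qT_t=(\nabla v|_q)^\perp$ and $\nabla v|_q\in T_qS_s$, so $\nu_{S_s}(q)\perp\nu_{T_t}(q)$; thus $S_s$ and $T_t$ meet orthogonally along $S_s\cap T_t$, and in particular each $S_s$ meets $\partial N^*=T_0$ orthogonally. Choosing $U$ to be a small open box makes the leaves of $\{S_s\}$ transversal to $U\cap\partial N^*$ and the leaves of $\{T_t\}$ either equal to, or disjoint from, $U\cap\partial N^*$, matching the conventions of the footnote.

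The only genuinely geometric step, and the one I expect to be the main obstacle, is the construction of $v$: one needs a function vanishing on $\partial N^*$ whose gradient is tangent to $S$, and this succeeds precisely because $S\perp\partial N^*$ forces the obstruction $c=\langle\partial_{n+1},\nu_S\rangle$ to vanish on $\partial S$, so that after dividing by $\xi$ it becomes a freely solvable first-order condition on $w$; without orthogonality no such $v$ exists. Everything else — transversality of $S$ to the slices, the diffeomorphism property of the flow-out, and the orthogonality identity — is soft, requiring only repeated shrinking of the neighbourhood to preserve open conditions (nonvanishing of the gradients, positivity of $w$).
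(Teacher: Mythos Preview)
Your proof is correct, but the route differs from the paper's. Both arguments share the same skeleton---construct one foliation directly and obtain the second as flow-outs along the normal field of the first---yet they run in opposite directions. The paper first produces $\{S_s\}$ by the most elementary device available: in Fermi coordinates with $x_1=\dist(\cdot,\partial N^*)$ and $x_{n+1}$ the signed $\partial N^*$-distance from $\partial S$, one writes $S$ as a graph $x_{n+1}=f(x_1,\dots,x_n)$ with $f=\partial f/\partial x_1=0$ along $\{x_1=0\}$ and simply sets $S_s:=\{x_{n+1}=f+s\}$; the Neumann condition survives translation, so each $S_s\perp\partial N^*$ for free. The second foliation $\{T_t\}$ is then the flow-out, along the unit normal $\nu$ to the leaves $S_s$, of the intrinsic parallel hypersurfaces $\Gamma_t\subset S$. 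You do the dual: you first build $\{T_t\}$ as level sets of a function $v=x_{n+1}w$ engineered so that $\nabla v$ is tangent to $S$, and then sweep out $\{S_s\}$ from a foliation $\{\sigma_s\}$ of $\partial N^*$ along $\nabla v$. Your construction of $v$ via Hadamard's lemma is where the orthogonality hypothesis $S\perp\partial N^*$ enters for you, and it costs a little more than the paper's one-line graph translation; in return, once $v$ is in hand, both foliations and their mutual orthogonality drop out automatically from the level-set/gradient-flow duality, whereas the paper must check $S_s\perp\partial N^*$ and $S_s\perp T_t$ as separate observations.
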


\begin{proof}
We first extend $S$ locally near $p$ to a foliation $\{S_s\}$ such that each $S_s$ meets $\partial N^*$ orthogonally. This can be done in a rather straightforward manner as follows. Let $(x_1,\cdots,x_{n+1})$ be a local Fermi coordinate system of $N^*$ centered at $p$ such that $x_1=\dist_{N^*}(\cdot,\partial N^*)$. Furthermore, we can assume that $(x_2,\cdots,x_{n+1})$ is a local Fermi coordinate system of $\partial N^*$ relative to the hypersurface $S \cap \partial N^*$, i.e. $x_{n+1}$ is the signed distance in $\partial N^*$ from $S \cap \partial N^*$. As in Section \ref{S:intro} we can express $S$ in such local coordinates as the graph $x_{n+1}=f(x_1,\cdots,x_n)$ of a function $f$ defined on a half ball $B^+_{r_0}$ such that $f=0=\frac{\partial f}{\partial x_1}$ along $B^+_{r_0} \cap \{x_1=0\}$. The translated graphs $x_{n+1}=f(x_1,\cdots,x_n)+s$ then gives a local foliation $\{S_s\}$ near $p$ such that each leaf $S_s$ is a hypersurface in $N^*$ which meets $\partial N^*$ orthogonally along its boundary $\partial S_s=S_s \cap \partial N^*$. Note that $\partial S_s$ gives a local foliation of $\partial N^*$ near $p$ obtained from the equi-distant hypersurfaces of $\partial S \subset \partial N^*$.

Next, we construct another foliation $\{T_t\}$ which is orthogonal to \emph{every} leaf of the foliation $\{S_s\}$ defined above. Let $q \in N^*$ be a point near $p$ which lies on the leaf $S_s$. We define $\nu(q)$ to be a unit vector normal to the hypersurface $S_s$. By a continuous choice of $\nu$ it gives a smooth unit vector field in a neighborhood of $p$ such that $\nu(q) \in T_q \partial N^*$ for each $q \in \partial N^*$ since each $S_s$ meets $\partial N^*$ orthogonally. As $\nu$ is nowhere vanishing near $p$, the integral curves of $\nu$ gives a local $1$-dimensional foliation of $N^*$ near $p$. We can put together these integral curves to form our desired foliation $\{T_t\}$ as follows. Let $\Gamma_t \subset S$ be the parallel hypersurface in $S$ which is of distance $t>0$ away from $S \cap \partial N^*$ (measured with respect to the intrinsic distance in $S$). Define $T_t$ to be the union of all the integral curves of $\nu$ which passes through $\Gamma_t$. It is clear that $\{T_t\}$ gives a local foliation near $p$. Since $\nu(q)$ is tangent to the leave $T_t$ which contains $q$, the leaves $S_s$ and $T_t$ must be orthogonal to each other for \emph{every} $s$ and $t$. This proves the lemma.
\end{proof}

\begin{figure}[h]
    \centering
    \includegraphics[width=0.6\textwidth]{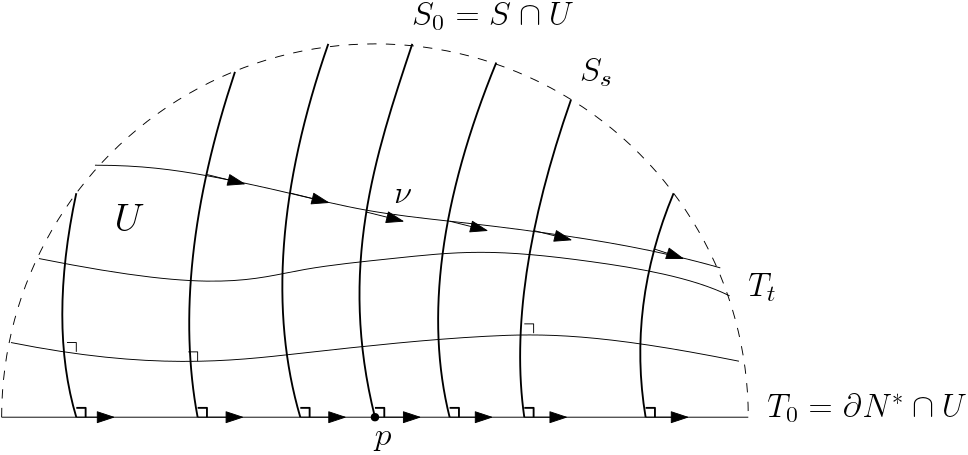}
        \caption{A local orthogonal foliation near a boundary point $p \in S \cap \partial N^*$.}
        \label{foliation}
\end{figure}

Next, we make use of the local orthogonal foliation in Lemma \ref{L:foliation} to give a decomposition of the second fundamental form of the leaves of $\{S_s\}$ under a suitable orthonormal frame.

\begin{lemma}
\label{L:frame}
Let $\{e_1,\cdots,e_{n+1}\}$ be a local orthonormal frame of $N^*$ near $p$ such that at each $q \in S_s \cap T_t$, $e_1(q)$ and $e_{n+1}(q)$ is normal to $S_s \cap T_t$ inside $S_s$ and $T_t$ respectively. Then, we have $\langle A^{S_s}(e_1),e_i \rangle=- \langle A^{T_t}(e_i),e_{n+1} \rangle$ for each $i=2,\cdots,n$, where $A^{S_s}$ and $A^{T_t}$ are the second fundamental forms of the hypersurfaces $S_s$ and $T_t$ in $N^*$ with respect to the unit normals $e_{n+1}$ and $e_1$ respectively.
\end{lemma}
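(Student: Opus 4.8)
The plan is to compute both sides using the defining formula $A^{S_s}(u) = -\nabla_u \nu_{S_s}$ and $A^{T_t}(u) = -\nabla_u \nu_{T_t}$, identifying $\nu_{S_s}$ with $e_{n+1}$ and $\nu_{T_t}$ with $e_1$ in the relevant neighborhood, and then playing off the two expressions against each other via the metric compatibility of $\nabla$. More precisely, at a point $q \in S_s \cap T_t$ we have
\[
\langle A^{S_s}(e_1), e_i \rangle = -\langle \nabla_{e_1} e_{n+1}, e_i \rangle, \qquad
\langle A^{T_t}(e_i), e_{n+1} \rangle = -\langle \nabla_{e_i} e_1, e_{n+1} \rangle,
\]
so the claimed identity is equivalent to
\[
\langle \nabla_{e_1} e_{n+1}, e_i \rangle = -\langle \nabla_{e_i} e_1, e_{n+1} \rangle.
\]

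First I would rewrite the right-hand side using $\langle e_1, e_{n+1}\rangle \equiv 0$ (the frame is orthonormal, and this holds throughout the neighborhood, not just at $q$, by the orthogonality built into Lemma \ref{L:foliation}): differentiating in the $e_i$ direction gives $\langle \nabla_{e_i} e_1, e_{n+1}\rangle + \langle e_1, \nabla_{e_i} e_{n+1}\rangle = 0$, hence the target identity becomes
\[
\langle \nabla_{e_1} e_{n+1}, e_i \rangle = \langle e_1, \nabla_{e_i} e_{n+1} \rangle,
\]
which is the symmetry of the second fundamental form $A^{S_s}$ evaluated on the pair $(e_1, e_i)$. That symmetry is standard, but to invoke it cleanly I need $e_1$ and $e_i$ to both be tangent to $S_s$ at $q$ — which is exactly the hypothesis, since $e_{n+1}$ is the normal to $S_s$ — and I need $\nabla_{e_1} e_{n+1}$ and $\nabla_{e_i} e_{n+1}$ to refer to an extension of $\nu_{S_s}$ as a genuine unit normal field along (a neighborhood in) $S_s$, which is provided by the foliation $\{S_s\}$: $e_{n+1}$ restricted to the leaf through $q$ is precisely such a field. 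Symmetry of the shape operator then follows from the torsion-freeness of $\nabla$ together with $[e_1,e_i]$ being tangent to $S_s$ — or, more directly, from $\langle \nabla_X \nu, Y\rangle - \langle \nabla_Y \nu, X\rangle = -\langle \nu, [X,Y]\rangle = 0$ when $X,Y$ are tangent to the leaf and $\nu$ is normal to it.

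The main subtlety — and the only real point requiring care — is bookkeeping about \emph{which} extensions of the vectors $e_1, e_i, e_{n+1}$ are used when writing $\nabla_{e_1} e_{n+1}$, etc., since covariant derivatives depend on the extension of the differentiated vector field but not of the direction. The clean way to handle this is: extend $e_{n+1}$ as the unit normal field of the foliation $\{S_s\}$ (so it is defined on all of $U$, and on each leaf $S_s$ it restricts to a unit normal of that leaf); extend $e_1$ as the unit normal field of the foliation $\{T_t\}$; then all four covariant-derivative expressions above make sense, the relation $\langle e_1, e_{n+1}\rangle \equiv 0$ holds identically on $U$ by the mutual orthogonality in Lemma \ref{L:foliation}, and the shape-operator symmetry for $A^{S_s}$ applies because $e_{n+1}$ is a bona fide unit normal field along the leaf $S_s$ through $q$ while $e_1(q), e_i(q) \in T_q S_s$. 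Assembling the three displayed equalities then yields the lemma. I expect no genuine obstacle here beyond stating the extension conventions precisely enough that the reader is convinced no circularity or misidentification has crept in.
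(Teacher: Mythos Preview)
Your proposal is correct and follows essentially the same route as the paper: both arguments reduce the identity to the symmetry of $A^{S_s}$ on the pair $(e_1,e_i)$, invoking the fact that $[e_1,e_i]$ is tangent to $S_s$. The paper compresses this into a single chain of equalities, while you spell out the extension conventions and the metric-compatibility step more carefully, but the substance is identical.
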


\begin{proof}
By definition of $A^{S_s}$ and $A^{T_t}$ (see Section \ref{S:intro}), we have 
\[ \langle A^{S_s}(e_1),e_i \rangle=\langle -\nabla_{e_1} e_{n+1},e_i \rangle =\langle e_{n+1}, \nabla_{e_i} e_1 \rangle =-\langle A^{T_t}(e_i),e_{n+1} \rangle,\]
where we used the fact that $[e_1,e_i]$ is tangent to $S_s$ in the second equality.
\end{proof}

%Although the \emph{global} existence of a codimension-one foliation is topologically obstructed by vanishing of its Euler characteristic \cite{Thurston76}, it is very easy to construct \emph{local} foliations by studying family of local solutions to some non-singular system of differential equations. In particular, one can integrate a non-vanishing vector field to obtain a local $1$-dimensional foliation. This can be used to show that any $(S,T)$ as above which meets orthogonally near some point $p \in S \cap T$ can be extended to a local orthogonal foliation near $p$. 

%%%%%%%%%%%%%%%%%%%%%%%%%%%%%%%%%%
% Section 3    	 		  Proof of Theorem 1.4        	             %
%%%%%%%%%%%%%%%%%%%%%%%%%%%%%%%%%%
\section{Proof of Theorem \ref{T:main-thm}}
\label{S:proof}
The proof is by a contradiction argument motivated by some of the ideas in \cite{White10}. Note that we will continue to adopt the notations in Section \ref{S:intro}. Suppose on the contrary that there exists a point $p \in \partial S=S \cap T$ which lies in the support of an $m$-dimensional varifold $V$ in $N$ which is stationary with free boundary. Our goal is to construct a tangential vector field $X \in \mathfrak{X}(N^*)$ which is compactly supported near $p$ such that $\delta V(X) <0$ (recall (\ref{E:1st variation})), which contradicts the stationarity of $V$.

%Since $N$ is strongly $m$-convex at $p$ (recall Definition \ref{D:orthogonal} (ii)) and $S$ is orthogonal to $\partial N^*$, it is clear that $N \cap \partial N^*$ is also strongly $m$-convex at $p$ as a sub-domain in $\partial N^*$. 
For every $\epsilon>0$ small, we can define
\[ \Gamma:=\{x \in \partial N^*: \dist_{\partial N^*}(x, \partial S)=\epsilon \dist^4_{\partial N^*}(x,p)\}, \]
which is an $(n-1)$-dimensional hypersurface in $\partial N^*$ that is smooth in a neighborhood of $p$. 

\textit{Claim 1: $\Gamma$ touches $\partial S$ from outside $T$ up to second order at $p$.}

\textit{Proof of Claim 1:} Let $(y_1,\cdots,y_{n-1},t)$ be the Fermi coordinate system of $\partial N^*$ centered at $p$ adapted to the hypersurface $\partial S$, i.e. $(y_1,\cdots,y_{n-1})$ is the geodesic normal coordinates of $\partial S$ centered at $p$ and $t$ is the signed distance function from $\partial S$ in $\partial N^*$ (taken to be negative in $T$). In such a Fermi coordinate system, locally near $p$ we have $\partial S=\{t=0\}$, $T=\{t \geq 0\}$ and 
\[ \Gamma=\{  t=\epsilon g(y_1,\cdots,y_{n-1},t)\}\]
for some function $g$ which is smooth defined near the origin. Using the definition of Fermi coordinates, we have $g$ vanishes up to second order at the origin since the metric components $g_{ij}$ of $\partial N^*$ in Fermi coordinates has $C^2$ bound only in terms of the geometry of $\partial S$ and $\partial N^*$. This proves the claim.

Next we want to extend $\Gamma$ to a hypersurface $S'$ in $N^*$ which meets $\partial N^*$ orthogonally along $\Gamma$ such that $S'$ touches $N$ from outside at $p$ up to second order. The construction of such an $S'$ can be done locally as follows. As in the proof of Lemma \ref{L:foliation}, let $(x_1,\cdots,x_{n+1})$ be a Fermi coordinate system around $p$ such that 
\begin{itemize}
\item $\{x_1 \geq 0\} \subset N^*$, 
\item $\{ x_{n+1} = f(x_1,\cdots,x_n)\} \subset S$, 
\item $\{ x_{n+1} \geq f(x_1,\cdots,x_n)\} \subset N$,
\item $\{x_1=x_{n+1}=0\} \subset \Gamma$.
\end{itemize}
%Note that for $\epsilon$ small, one can express $\Gamma$ as some graph $\{x_{n+1}=g(x_2,\cdots,x_n)\}$ inside $\{x_1=0\}$. 
%Since $\Gamma$ touches $S \cap \partial N^*$ from outside $N \cap \partial N^*$, 
By Claim 1, we have $f(0,x_2,\cdots,x_n) \geq 0$ with equality holds only at the origin. Take $S'$ to be the graph $x_{n+1}=u(x_1,\cdots,x_n)$ of the smooth function 
\[ u(x_1,\cdots,x_n):=\frac{x_1^2}{2} \frac{\partial^2 f}{\partial^2 x_1}(0)  + \frac{x_1^3}{6} \left(\frac{\partial^3 f}{\partial^3 x_1}(0)- \epsilon \right).\]
Since $u=\frac{\partial u}{\partial x_1}=0$ along $\{x_1=0\}$, $S'$ is indeed an extension of $\Gamma$ meeting $\partial N^*$ orthogonally. It is clear from the definition that the Hessian of $u$ and $f$ agrees at the origin. For $\epsilon$ sufficiently small, $f \geq u$ everywhere in a neighborhood of $p$ with equality holds only at the origin where $f$ and $u$ agrees up to second order. In order words, $S'$ touches $N$ from outside up to second order at $p$. 

Since $S'$ meets $\partial N^*$ orthogonally, we can apply all the results in Section \ref{S:foliations} to $S'$ to obtain local foliations $\{S'_s\}$ and $\{T_t\}$ as in Lemma \ref{L:foliation}. We will use the same notations as in the proof of Lemma \ref{L:foliation} in what follows (with $S$ replaced by $S'$). Define a smooth function $s$ in a neighborhood of $p$ such that $s(q)$ is the unique $s$ such that $q \in S'_s$. 

\begin{lemma}
\label{L:grad-s}
$\nabla s =\psi \nu$ for some function $\psi$ which is smooth in a neighborhood of $p$ such that $\psi=1$ along $\partial N^*$.
\end{lemma}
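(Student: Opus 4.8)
The plan is to unwind the definitions of $s$ and $\nu$ and exploit the orthogonality built into Lemma \ref{L:foliation}. By construction, the level sets of the function $s$ are exactly the leaves $S'_s$ of the foliation. Hence $\nabla s$ is everywhere normal to $S'_s$, which is precisely the direction of the unit vector field $\nu$ defined in the proof of Lemma \ref{L:foliation}. Therefore $\nabla s = \psi \nu$ for a scalar function $\psi$, and smoothness of $\psi$ near $p$ follows from smoothness of $s$ (which is inherited from the smooth dependence of the foliation $\{S'_s\}$ on the parameter $s$, itself coming from the translated-graph construction $x_{n+1} = u(x_1,\dots,x_n) + s$) together with the fact that $\nu$ is a smooth nowhere-vanishing unit field near $p$; indeed $\psi = \langle \nabla s, \nu \rangle$.

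It remains to identify $\psi$ along $\partial N^*$. First I would observe that, since each leaf $S'_s$ meets $\partial N^*$ orthogonally, the vector field $\nu$ is tangent to $\partial N^* = T_0$ along $\partial N^*$; more generally $\nu$ is tangent to each leaf $T_t$, since the integral curves of $\nu$ are exactly the curves out of which the $T_t$ are assembled. Now I would compute $\psi$ by testing $\nabla s$ against the direction transverse to the leaves $S'_s$ within a fixed leaf $T_t$: parametrize, inside $\partial N^* = T_0$, the unit-speed geodesic (with respect to the induced metric on $\partial N^*$, or equivalently using the Fermi coordinate $x_{n+1}$) emanating from $\partial S' = S' \cap \partial N^*$ and normal to it; call it $\gamma(r)$. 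Because $x_{n+1}$ is the signed distance in $\partial N^*$ from $S' \cap \partial N^*$, and because the leaves $S'_s$ restrict on $\partial N^*$ to the equidistant hypersurfaces $\partial S'_s = \{x_{n+1} = \text{const}\}$ (again from the translated-graph description, on $\{x_1 = 0\}$ one has $u \equiv 0$ so $S'_s \cap \partial N^*$ is just $\{x_{n+1} = s\}$), we get $s(\gamma(r)) = r$ along $\partial N^*$. Differentiating, $\langle \nabla s, \dot\gamma \rangle = 1$. Since $\dot\gamma$ is a unit vector normal to $\partial S'_s$ inside $\partial N^*$, and $\nu$ is the unit vector normal to $S'_s$ that is tangent to $\partial N^*$, the two unit vectors $\dot\gamma$ and $\nu$ agree (up to a consistent sign fixed by the continuous choice of $\nu$) along $\partial N^*$; hence $\psi = \langle \nabla s, \nu \rangle = \langle \nabla s, \dot\gamma \rangle = 1$ there.

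I expect the only real subtlety to be the bookkeeping in the previous step: making sure that the unit normal to $S'_s$ within $\partial N^*$ (call it $\dot\gamma$) really coincides with the restriction to $\partial N^*$ of the ambient unit normal $\nu$ to $S'_s$, rather than differing by a factor. This is where orthogonality of the foliations is used in an essential way --- $\nu$ lies in $T_q\partial N^*$ at boundary points precisely because $S'_s \perp \partial N^*$, so its restriction to $\partial N^*$ is a genuine unit normal of $\partial S'_s$ inside $\partial N^*$, forcing the factor to be $1$. Everything else (smoothness of $s$, the identity $\nabla s \parallel \nu$) is a formal consequence of the foliation construction in Lemma \ref{L:foliation} and requires no new computation.
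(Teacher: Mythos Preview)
Your proposal is correct and follows exactly the same approach as the paper. The paper's proof is two sentences: the first observes that $\nabla s$ is normal to each leaf $S'_s$ (since $s$ is constant there), giving $\nabla s=\psi\nu$; the second asserts $\psi=1$ on $\partial N^*$ because the boundary traces $\partial S'_s$ are the parallel (equidistant) hypersurfaces from $\partial S'$ in $\partial N^*$ --- precisely the mechanism you spell out via the unit-speed geodesic $\gamma$ and the identification of $\dot\gamma$ with $\nu|_{\partial N^*}$ using orthogonality.
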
 

\begin{proof}
Since $s$ is constant on each leaf $S'_s$ by definition, $\nabla s$ is normal to the hypersurface $S'_s$ and thus $\nabla s= \psi \nu$ for some smooth function $\psi$ in a neighborhood of $p$. The last assertion follows from our construction that $\partial S'_s$ are parallel hypersurfaces from $\partial S'$ in $\partial N^*$.
\end{proof}

Now, we define a vector field $X$ on $N^*$ by
\[ X(q):=\phi(s(q)) \nu(q),\] 
where $\phi(s)$ is the cutoff function defined by
\[
\phi(s)=\left\{ \begin{array}{ll}
e^{1/(s-\ep)} & \text{if $0\leq s<\ep$},\\
0 & \textrm{if $s\geq \ep$}.
\end{array}\right.
\]
As $S'$ touches $N$ at $p$ from outside, we see that $X$ is compactly supported in a neighborhood of $p$. Moreover, since $\nu(q) \in T_q \partial N^*$ at all points $q \in \partial N^*$, we have $X \in \mathfrak{X}(N^*)$. To finish the proof, we just have to show that $X$ decreases the area of $V$ up to first order, i.e. $\delta V(X) <0$. 

At each $q$ in a neighborhood of $p$, we consider the bilinear form on $T_q N^*$ defined by
\[ Q(u,v):= \langle \nabla_u X, v \rangle (q).\]
Let $\{e_1,\cdots,e_{n+1}\}$ be an orthonormal frame as in Lemma \ref{L:frame} (note that $e_{n+1}=\nu$). By Lemma \ref{L:grad-s}, when $u=e_i$, $v=e_j$, $i,j=1,\cdots,n$, we have
\[ Q(e_i,e_j)= \langle \nabla_{e_i} (\phi \nu), e_j \rangle = -\phi \langle A^{S'_s}(e_i),e_j \rangle . \]
Moreover, since $\langle \nu,\nu \rangle \equiv 1$ and $\nabla_{e_i}s \equiv 0$, we have for $i=1,\cdots,n$,
\[ Q(e_i,e_{n+1})=  \langle \nabla_{e_i} (\phi \nu), e_{n+1} \rangle = \phi \langle \nabla_{e_i} \nu, \nu \rangle =0.\]
On the other hand, when $u=e_{n+1}=\nu$, we have 
\[ Q(e_{n+1},e_1)= \langle \nabla_{e_{n+1}} (\phi \nu), e_1 \rangle = \phi \langle \nabla_\nu \nu, e_1 \rangle = \phi \langle A^{T_t}(\nu),\nu \rangle. \]
Since $\langle \nu, e_j \rangle \equiv 0$, we have for $j=2,\cdots,n$,
\[ Q(e_{n+1},e_j)= \langle \nabla_{e_{n+1}} (\phi \nu), e_j \rangle = \phi \langle \nabla_\nu \nu, e_j \rangle. \]
Finally, when $u=v=e_{n+1}=\nu$, using Lemma \ref{L:grad-s} and $\langle \nu,\nu \rangle \equiv 1$,
\[  Q(e_{n+1},e_{n+1})= \langle  (\nabla_\nu \phi) \nu,\nu \rangle + \phi \langle \nabla_\nu \nu,\nu \rangle=  \phi' \psi. \]
Therefore, we can express $Q$ in this frame as the following $n+1$ by $n+1$ matrix: 
\begin{equation}
\label{E:Q}
Q=\Mat{- \phi A^{S'_s}_{11} & \phi A^{T_t}_{n+1,j} & 0  \\
             \phi  A^{T_t}_{i,n+1} & -\phi A^{S'_s}_{ij} & 0 \\ 
              \phi  A^{T_t}_{n+1,n+1} & \phi \langle \nabla_\nu \nu, e_j \rangle & \phi' \psi}
\end{equation}
where $i,j=2,\cdots,n$, and $q \in S_s \cap T_t$.

\begin{lemma}
\label{L:Q}
When $\ep>0$ is small enough, $\tr_P Q<0$ for all $m$-dimensional subspace $P \subset T_q N^*$.
\end{lemma}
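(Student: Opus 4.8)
\textbf{Proof plan for Lemma \ref{L:Q}.}
The plan is to compute $\tr_P Q$ for an arbitrary $m$-plane $P \subset T_q N^*$ and show it is strictly negative once $\ep$ is small and $q$ is close to $p$. First I would observe that the matrix $Q$ in \eqref{E:Q} is \emph{not} symmetric, but for computing $\tr_P Q := \sum_{k=1}^m \langle \nabla_{f_k} X, f_k \rangle$ over an orthonormal basis $\{f_1,\dots,f_m\}$ of $P$ only the symmetric part $Q^{\mathrm{sym}} = \tfrac12(Q + Q^{\top})$ matters, so I would replace $Q$ by $Q^{\mathrm{sym}}$ at the outset. The key structural point, which is where Lemma \ref{L:frame} enters, is that the off-diagonal coupling between the $e_1$-direction and the $\{e_2,\dots,e_n\}$-directions coming from $\phi A^{T_t}_{i,n+1}$ in the first column is, up to sign, exactly cancelled against $-\phi\langle A^{S'_s}(e_1), e_i\rangle$; more precisely Lemma \ref{L:frame} gives $\langle A^{S'_s}(e_1), e_i\rangle = -\langle A^{T_t}(e_i), e_{n+1}\rangle$, so that the symmetrized upper-left $n\times n$ block of $Q$ is simply $-\phi\, A^{S'_s}$ restricted to $\Span\{e_1,\dots,e_n\} = T_q S'_s$. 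Thus $Q^{\mathrm{sym}}$ has the block form $\begin{pmatrix} -\phi A^{S'_s} & c \\ c^{\top} & \phi'\psi \end{pmatrix}$ where $c$ is the column of mixed terms $\tfrac12(\phi A^{T_t}_{n+1,n+1}, \dots)$ involving $\langle \nabla_\nu\nu, e_j\rangle$ and the $e_{n+1}$-row entries, all of which stay bounded as $\ep \to 0$.

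Next I would estimate $\tr_P Q^{\mathrm{sym}}$ from above. Writing $P$ as an $m$-plane, decompose each basis vector into its $T_q S'_s$-component and its $\nu$-component; since $\phi' \le 0$ wherever $\phi > 0$ (indeed $\phi' = \tfrac{-1}{(s-\ep)^2} e^{1/(s-\ep)} \le 0$ for $0 \le s < \ep$), the diagonal $\nu\nu$-contribution $\phi'\psi$ is $\le 0$ for $\ep$ small (as $\psi \to 1$ near $p$), and it is \emph{strictly} negative and large in absolute value near $s = 0$. Meanwhile $\tr_{P'}(-\phi A^{S'_s}) \le -\phi\,(\kappa_1 + \cdots + \kappa_m)(S'_s)$ for any subspace $P' \subset T_q S'_s$ of dimension $\le m$, by the standard fact that the trace of a symmetric operator over a $k$-plane is at least the sum of its $k$ smallest eigenvalues. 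Here is the crucial input: $S'$ touches $N$ from outside at $p$ up to second order, and $S$ is strongly $m$-convex at $p$ \emph{with respect to $\nu_S$}; since $S'$ agrees with $S$ to second order at $p$ and has the opposite orientation of normal relative to $N$ built into the construction, the leaf $S'_0 = S'$ satisfies $\kappa_1 + \cdots + \kappa_m < 0$ at $p$ with respect to $\nu = e_{n+1}$ (equivalently, $S$ is strongly $m$-convex at $p$ with respect to $\nu_S$, and the sign flips because $\nu$ points \emph{out} of $N$). By continuity this persists for all leaves $S'_s$ and all nearby $q$, so $-\phi\,(\kappa_1+\cdots+\kappa_m)(S'_s) \le -c_0 \phi$ for some $c_0 > 0$ on the support of $\phi$.

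The remaining work is to control the cross terms $c$. Expanding $\tr_P Q^{\mathrm{sym}}$ and using that any unit vector in $P$ with $\nu$-component of size $\theta$ contributes at most $-c_0\phi(1-\theta^2) + C\phi\,\theta\sqrt{1-\theta^2} + \phi'\psi\,\theta^2$ from the relevant rows, I would invoke the elementary inequality $-a(1-\theta^2) + b\theta\sqrt{1-\theta^2} \le \tfrac{b^2}{4a}$ (completing the square) to bound the $S'_s$-part-plus-cross-part by $\tfrac{C^2}{4c_0}\phi$, a \emph{bounded} multiple of $\phi$. Summing over the $m$ basis vectors gives $\tr_P Q^{\mathrm{sym}} \le \tfrac{mC^2}{4c_0}\phi + (\text{sum of }\phi'\psi\,\theta_k^2)$. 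Now I would use the decisive feature of the exponential cutoff: as $s \to \ep^-$, $\phi'/\phi = -1/(s-\ep)^2 \to -\infty$, so on the region where $\phi > 0$ one has $\phi' \le -\lambda\phi$ for $\lambda$ as large as we like once $\ep$ is small — wait, more carefully, $\phi'/\phi = -(s-\ep)^{-2} \le -\ep^{-2}$ for $0 \le s < \ep$, so $\phi' \le -\ep^{-2}\phi$ \emph{uniformly}. Hence the total $\nu\nu$-contribution is $\le -\tfrac12 \ep^{-2}\phi \sum_k \theta_k^2$ for $\ep$ small (absorbing $\psi \approx 1$), and so
\[
\tr_P Q^{\mathrm{sym}} \le \phi\Big( \tfrac{mC^2}{4c_0} - \tfrac{1}{2\ep^2}\textstyle\sum_k \theta_k^2 \Big) \quad \text{when } \textstyle\sum_k \theta_k^2 > 0,
\]
which is $< 0$ for $\ep$ small; and when $\sum_k \theta_k^2 = 0$, i.e. $P \subset T_q S'_s$, we get directly $\tr_P Q^{\mathrm{sym}} = \tr_P(-\phi A^{S'_s}) \le -c_0 \phi < 0$. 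Either way $\tr_P Q < 0$ on $\spt\phi$, as claimed.

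\textbf{Main obstacle.} I expect the genuinely delicate point to be the cross-term bookkeeping: the matrix is non-symmetric, the frame $\{e_i\}$ is only defined away from where $S'_s \cap T_t$ degenerates, and one must check that the "bad" off-diagonal entries (the first column and the $e_{n+1}$-row) are all $O(\phi)$ with constants independent of $\ep$ — this relies on the foliations $\{S'_s\}, \{T_t\}$ and the frame having $\ep$-independent $C^1$ bounds near $p$, which in turn follows because the only $\ep$-dependence entered through the harmless cubic term $-\tfrac{x_1^3}{6}\ep$ in $u$. Once that uniformity is in hand, the exponential cutoff does all the remaining work by making $|\phi'|/\phi$ arbitrarily large, exactly as in \cite{White10}.
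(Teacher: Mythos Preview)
Your overall architecture matches the paper's, and your identification of the symmetrized upper-left $n\times n$ block of $Q$ as $-\phi A^{S'_s}$ via Lemma~\ref{L:frame} is correct. But the core estimate has a genuine gap: you assert that each basis vector $f_k$ with $\nu$-component $\theta_k$ contributes at most $-c_0\phi(1-\theta_k^2)$ from the $A^{S'_s}$-block, and equivalently that $\tr_{P'}(-\phi A^{S'_s}) \le -c_0\phi$ for \emph{every} subspace $P'\subset T_qS'_s$ of dimension $\le m$. This is false. Strong $m$-convexity only says the sum of the $m$ smallest principal curvatures is positive; individual eigenvalues, and hence traces over lower-dimensional subspaces, can be negative. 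Consequently your final displayed bound $\phi\big(\tfrac{mC^2}{4c_0} - \tfrac{1}{2\ep^2}\sum_k \theta_k^2\big)$ does not close either: as $\sum_k\theta_k^2 \to 0^+$ it tends to the fixed positive value $\tfrac{mC^2}{4c_0}\phi$, so $m$-planes just barely tilted off $T_qS'_s$ are not covered.

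The fix --- and this is exactly what the paper does --- is to choose the basis adapted to the hyperplane $T_q S'_s$: when $P \not\subset T_q S'_s$ one has $\dim(P\cap T_q S'_s)=m-1$, so take $f_1,\dots,f_{m-1}\in T_q S'_s$ and write $f_m = (\cos\theta)\,v_0 + (\sin\theta)\,e_{n+1}$ with $v_0\in T_q S'_s$ orthogonal to the $f_i$. Then the $A^{S'_s}$-contribution to $\tr_P Q$ becomes $-\phi\,\tr_{P'}A^{S'_s} + \phi\sin^2\theta\,A^{S'_s}(v_0,v_0)$ where $P'=\Span\{v_0,f_1,\dots,f_{m-1}\}$ is a genuine $m$-plane in $T_q S'_s$, and \emph{now} strong $m$-convexity legitimately gives $-\phi\,\tr_{P'}A^{S'_s}\le -c_0\phi$ uniformly. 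The remaining terms are $O(\phi)\sin^2\theta + O(\phi)|\sin\theta\cos\theta| + \phi'\psi\sin^2\theta$ in a single angle $\theta$, and your observation $\phi'\le -\ep^{-2}\phi$ (or the paper's calculus Lemma~\ref{L:calculus}) finishes it. A minor aside: your sign discussion is backwards --- $\nu$ points \emph{into} $N$ (since $s$ increases into $N$ and $\nabla s=\psi\nu$ with $\psi>0$ by Lemma~\ref{L:grad-s}), so $\kappa_1+\cdots+\kappa_m>0$ with respect to $\nu$, not $<0$; this is consistent with the conclusion $-\phi\,\tr_{P'}A^{S'_s}\le -c_0\phi$ you want.
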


\begin{proof}
If $P \subset T_q S'_s$, then $\tr_P Q < 0$ since $S'_s$ is strongly $m$-convex in a neighborhood of $p$. Therefore, we focus on the case $P \not \subset T_q S'_s$. In this case, one can fix an orthonormal basis $\{v_1,\cdots, v_m\}$ for $P$ such that $\{v_1,\cdots,v_{m-1}\} \subset T_q (S'_s \cap T_t)$. As $P \not \subset T_q S'_s$, there exists some unit vector $v_0 \in T_q S'_s$ with $v_0 \perp v_i$ for $i=1,\cdots,m-1$ and $\theta \in (0,\pi)$ such that
\[ v_m= (\cos \theta) \; v_0 + (\sin \theta) \; e_{n+1}. \]
Denote $P'=\Span \{v_0,v_1,\cdots,v_{m-1}\} \subset T_q S'_s$. 
On the other hand, since $v_0 \in T_q S'_s$, one can write 
\[ v_0 =a_1 e_1 + \cdots + a_n e_n, \]
where $a_1^2 + \cdots + a_n^2=1$. 
Therefore, using (\ref{E:Q}) and that $\phi' \leq -\frac{1}{\ep^2} \phi$, by possibly shrinking the neighborhood of $p$ we have $\psi  \geq 1/2$, $|A^{T_t}| \leq K$, $|A^{S'_s}| \leq K$ and $|\langle \nabla_\nu \nu, e_j \rangle| \leq K$ for some constant $K>0$ (depending on the chosen orthogonal foliation in Lemma \ref{L:foliation} but independent of $\epsilon$), one then obtains
\begin{displaymath}
\begin{split}
\tr_P Q &= \sum_{i=1}^{m-1} Q(v_i,v_i) + Q(v_m,v_m) \\
&=  \sum_{i=1}^{m-1} Q(v_i,v_i) + \cos^2 \theta \; Q(v_0,v_0) + \sin \theta \cos \theta \; Q(e_{n+1},v_0) \\
& \hspace{3cm} + \sin^2 \theta \; Q(e_{n+1},e_{n+1}) \\
&= - \phi \tr_{P'} A^{S_s'} +\sin^2 \theta \; \left(\phi' \psi+\phi A^{S_s'}(v_0,v_0) \right) + a_1 \phi \sin \theta \cos \theta \; A^{T_t}_{n+1,n+1}\\
& \hspace{3cm} + \sum_{j=2}^n a_j \phi \sin \theta \cos \theta \; \langle \nabla_\nu \nu, e_j \rangle  \\
&\leq  - \phi \tr_{P'} A^{S_s'} + \phi \left( (K-\frac{1}{2\ep^2})  \sin^2 \theta + \sqrt{n} K |\sin \theta \cos \theta| \right) 
\end{split}
\end{displaymath}

\begin{lemma}
\label{L:calculus}
As $\ep \to 0$, we have
\[   \max_{\theta \in [0,\pi]}\left[  (K-\frac{1}{2\ep^2})  \sin^2 \theta + \sqrt{n} K |\sin \theta \cos \theta| \right] \to 0. \]
\end{lemma}
\begin{proof}
Define the function $F:[0,\pi] \to \mathbb{R}$ by
\[ F(\theta):=   (K-\frac{1}{2\ep^2})  \sin^2 \theta + \sqrt{n} K |\sin \theta \cos \theta|.\]
Notice that $F(\theta)=F(\pi-\theta)$ for all $\theta \in [0,\pi/2]$ and that $F(0)=0$, $F(\pi/2)=K-\frac{1}{2}\ep^{-2}$ which is negative as long as $\epsilon < 1/\sqrt{2K}$. Moreover, if $F'(\theta_0)=0$ at some $\theta_0 \in (0,\pi/2)$, then we have
\begin{equation}
\label{E:theta_0}
\tan 2 \theta_0 = \frac{\sqrt{n} K}{\frac{1}{2}\ep^{-2} -K}.
\end{equation}
Note that such a $\theta_0$ is unique and $\theta_0 \to 0$ as $\ep \to 0$. Using (\ref{E:theta_0}) and L'Hospital's rule, $F(\theta_0) \to 0$ as $\ep \to 0$. This proves Lemma \ref{L:calculus}.
\end{proof}

Using Lemma \ref{L:calculus} and that $S'_s$ is strongly $m$-convex in a small neighborhood of $p$ when $\ep$ is sufficiently small, we have $\tr_P Q < 0$ and thus finished the proof of Lemma \ref{L:Q}.
\end{proof}

%The rest of the proof of Theorem \ref{T:main-thm} follows from the first variation formula for varifolds \cite[\S 39]{Simon} and we refer the reader to \cite{White10} for the details.

To finish the proof, recall that the first variation formula \cite[39.2]{Simon} says
\begin{eqnarray*}
\delta V(X) &=& \int \operatorname{div}_P X(q) \; dV(q,S) \\
&=& \int \tr_P Q (q) \; dV(q,S) < 0,
\end{eqnarray*} 
where the last inequality follows from Lemma \ref{L:Q} and that the support of the vector field $X$ inside $N$ is contained in a very small neighborhood of $p$. This gives a contradiction to the assumption that $V$ is stationary with free boundary. This finishes the proof of Theorem \ref{T:main-thm}.

\bibliographystyle{amsplain}
\bibliography{references-max-principle}

\end{document}